\numberwithin{equation}{section}
\numberwithin{figure}{section}
\theoremstyle{plain}
\newtheorem{thm}{\protect\theoremname}
\theoremstyle{plain}
\newtheorem{lem}[thm]{\protect\lemmaname}
\theoremstyle{definition}
\newtheorem{defn}[thm]{\protect\definitionname}
\theoremstyle{remark}
\newtheorem{rem}[thm]{\protect\remarkname}
\providecommand{\definitionname}{Definition}
\providecommand{\lemmaname}{Lemma}
\providecommand{\remarkname}{Remark}
\providecommand{\theoremname}{Theorem}
\begin{document}
\title{{\small{}Symmetric Solutions to Nonlinear Vectorial 2nd Order ODE's}}
\subjclass[2000]{Primary{\small{} 34A34; }Secondary{\small{} }70F15{\small{} Department
of }M{\small{}athematics , WVU, }M{\small{}organtown WV 26506}}
\email{aaabdulhussein@mix.wvu.edu{\small{}$^{1}$ }, gingold@math.wvu.edu{\small{}$^{2}$}}
\keywords{Nonlinear; Second order; Vectorial; Systems; Ordinary Differential;
Even; Odd; Celestial Mechanics; N body problem}
\author{{\small{}Ali Abdulhussein$^{1}$ and Harry Gingold$^{2}$}}
\begin{abstract}
It is proven that second order vectorial nonlinear differential systems
$y''=f(y)$ , possess a continuum of symmetric solutions. They are
shown to possess a continuum of even solutions. If $f(y)$ is an odd
function of $y$ , then $y''=f(y)$ is shown also to possess a continuum
of odd solutions. The results apply to a significant family of second
order vectorial nonlinear differential systems that are not dissipative.
This family of differential equations includes the celebrated $N$
body problem of celestial mechanics and other central force problems.
\end{abstract}

\maketitle

\section{{\small{}Introduction}}

The purpose of this article is to study symmetric solutions to second
order nonlinear ordinary vectorial differential systems. It was motivated
by the question whether or not the celebrated equations of celestial
mechanics possess solution in which the future evolution of trajectories
of the $N$ heavenly bodies is a perfect reflection of their past.
It is shown, under appropriate conditions that nonlinear second order
ordinary vectorial differential systems $y''=f(y)$ , possess a continuum
of even solutions about any initial point $t_{0}$. Namely, for any
$t_{0}\in\mathbb{R}$ , there exist a continuum of solutions $y[(t-t_{0})]$
such that 
\begin{equation}
y[(t-t_{0})]=y([-(t-t_{0})]),\qquad(t-t_{0})\in[-L,L],\quad L>0.\label{eq:EVENSYMMETRYINTRO}
\end{equation}
If $f(y)$ is an odd function of $y$, namely if $-f(y)=f(-y)$ ,
then $y''=f(y)$ is shown to also possesses a continuum of odd solutions
about any point $t_{0}\in\mathbb{R}$. Namely, for any $t_{0}\in\mathbb{R}$
, there exist a continuum of solutions $y[(t-t_{0})]$ such that
\begin{equation}
-y[(t-t_{0})]=y[(-(t-t_{0}))],\qquad(t-t_{0})\in[-L,L],\quad L>0.\label{eq:ODDSOLUTIONABOUTt0}
\end{equation}

In contrast , non constant symmetric solutions to first order differential
systems $z'=H(z)$ are ``scarce''. We will elaborate on this scarcity
in the sequel at the end of section 3 .

In what follows we propose a result that applies to systems $y''=f(y)$
where $y$ is a column vector with $n$ scalar components. The proposed
theorem applies to a significant list of equations that include central
force problems and the pendulum equation. The existence of a continuum
of even solutions to the celebrated $N$ body problem, that is given
inhere, is a manifestation of the predictive powers of Newton's equations
of celestial mechanics. The method of successive approximations is
adapted as a main tool that yield the desired results. As a byproduct
we obtain new estimates on the interval of existence. 

We could not find this result in the voluminuous literature of ordinary
differential equations. Compare with \cite{Boyce E.W. and DiPrima R.C,BRAUERNOHEL,DIACUISENTROPIC,GuckenheimerHolmes,HILLEODESCOMPLEX,JORDAN=000026SMITH,L. Perko,P.F.Hsieh=000026Y. Sibuya}.
Neither did we come across the existence of even solutions to the
$N$ body problem. 

These results could be useful to modeling of physical phenomenon.
They could be also useful to the numerical approximations of solutions
and to phase space analysis . Given a scalar ordinary differential
equation $y''=f(y)$, let $(y,y')$ be the phase plane. Then, the
even solutions orbits intercept the $y$ axis and the odd solutions
orbits, if any, intercept the $y'$ axis.

The order of presentation is as follows. Section 2 is dedicated to
preliminary definitions and lemmas that are instrumental in proving
our main Theorem 11. We study symmetry properties of integrals and
derivatives of odd and even functions. In section 3 we specialize
the method of successive approximations to second order nonlinear
vectorial ordinary differential equations $y''=f(y)$. We prove the
main result of this article that is Theorem 11. In section 4 we list
various second order nonlinear differential systems of equations that
occur in applications and to which our main Theorem 11 applies . The
distinguished N body problem of celestial mechanics is on this list
. This means, that the N body problem possesses a continuum of celestial
mechanics solutions in which the future is a perfect reflection of
the past.

\section{Definite Integrals and Derivatives of Odd and Even Functions.}

{\small{}In preparation to proving the existence of even and odd solutions
to vectorial nonlinear differential systems of the form $y''=f(y)$
, we need some notation and preparatory lemmas. }In what follows we
may suppress the notation $(t)$ in the function notation $y(t)$
and replace it by $y$. This, when the meaning is clear. We denote
$y'(t)=\frac{dy(t)}{dt}=\frac{dy}{dt}=y'$.
\begin{lem}
Let $L>0$. Denote by $I$ an open interval $(-L,L)$ or a closed
interval $[-L,L]$.

i) Assume that $-\phi(t)=\phi(-t)$ , where $\phi(t)\in C(I)$ is
an odd function on $I$ . Then, $G(u)$ defined below

\begin{equation}
G(u):=\int_{0}^{u}\phi(s)ds,\label{eq:The Inside Integral}
\end{equation}

is an even function of $u\in I$.

ii) Assume that $\phi(t)=\phi(-t)$ , where $\phi(t)\in C(I)$ , is
an even function on $I$ . Then, $G(u)$ defined below

\begin{equation}
G(u):=\int_{0}^{u}\phi(s)ds,\label{eq:The Inside Integral-1}
\end{equation}

is an odd function on $I$.

iii) Assume that $\phi(t)\in C^{1}(I)$ is an even function. Then
the derivative $\varPsi(t):=\frac{d\phi(t)}{dt}$ is an odd function
of $t\in I$.

iv) Assume that $\phi(t)\in C^{1}(I)$ is an odd function. Then the
derivative $\varPsi(t):=\frac{d\phi(t)}{dt}$ is an even function
of $t\in I$.
\end{lem}

\begin{proof}
We first prove i) . Consider
\begin{equation}
G(-u)=\int_{0}^{-u}\phi(s)ds.
\end{equation}
Make the following change of variables
\begin{equation}
s=-v\Longrightarrow\:ds=-dv,\:and\:\{s=0\Rightarrow v=0\},\:and\:\{s=-u\Rightarrow v=u\}.\label{eq:CHANGEOFVARIABLES1}
\end{equation}

Then,
\begin{equation}
G(-u)=\int_{0}^{-u}\phi(s)ds=-\int_{0}^{u}\phi(-v)dv=\int_{0}^{u}\phi(v)dv=G(u).\label{eq:PHIEVEN}
\end{equation}

as desired. The proof for ii) is similar. Apply the change of variables
(\ref{eq:CHANGEOFVARIABLES1}) with the assumption $\phi(s)=\phi(-s)$
. Then,

\begin{equation}
G(-u)=\int_{0}^{-u}\phi(s)ds=-\int_{0}^{u}\phi(-v)dv=-\int_{0}^{u}\phi(v)dv=-G(u).
\end{equation}

as desired.

iii) We focus on the quotient below with $t\in(-L,L)$ and with $h\neq0$
and $h$ arbitrarily small.
\begin{equation}
Q(t,h):=\frac{\phi(t+h)-\phi(t)}{h}.\label{eq:Quotient1}
\end{equation}
Since $\phi(t)$ is an even function then
\begin{equation}
\phi(t+h)=\phi(-t-h),\:\phi(t)=\phi(-t).\label{eq:QUOTIENTCHNGEDBYSYMMETRY1}
\end{equation}
Substitute from (\ref{eq:QUOTIENTCHNGEDBYSYMMETRY1}) into the right
hand side of (\ref{eq:Quotient1}) to obtain 
\begin{equation}
Q(t,h)=-Q(-t,-h)=-\frac{\phi(-t-h)-\phi(-t)}{-h}.\label{eq:RELATIONQ=00003D-Q}
\end{equation}
Put $\psi(t):=\frac{d\phi(t)}{dt}$ . Take the limit as $h\rightarrow0$
in (\ref{eq:RELATIONQ=00003D-Q}) and obtain

\begin{equation}
\varPsi(t):=\frac{d\phi(t)}{dt}=lim_{h\rightarrow0}Q(t,h)=-lim_{h\rightarrow0}Q(-t,-h)=-\varPsi(-t).
\end{equation}

If $I$ is the closed interval $[-L,L],$ one must exercise caution
with signs of $h$ at the end points of $I$. Assume that $t=L.$
Then in (\ref{eq:Quotient1}) we must assume that $h<0$ in order
to make $\phi(L+h)$ well defined . Notice then that $\phi(-L-h)$
is also well defined and consequently $Q(-t,-h)$ in (\ref{eq:RELATIONQ=00003D-Q})
is also well defined so that $\frac{d\phi(L)}{dt}=-\frac{d\phi(-L)}{dt}$
exist as a one sided limit with $h<0$. An analogous argument holds
at $t=-L$ with $h>0$.

iv) We focus again on the quotient 
\begin{equation}
Q(t,h):=\frac{\phi(t+h)-\phi(t)}{h}.\label{eq:Quotient1-1}
\end{equation}
Since $\phi(t)$ is an odd function then
\begin{equation}
\phi(t+h)=-\phi(-t-h),\:\phi(t)=-\phi(-t).\label{eq:QUOTIENTCHNGEDBYSYMMETRY1-1}
\end{equation}
Substitute from (\ref{eq:QUOTIENTCHNGEDBYSYMMETRY1-1}) into the right
hand side of (\ref{eq:Quotient1-1}) to obtain 
\begin{equation}
Q(t,h)=Q(-t,-h)=\frac{\phi(-t-h)-\phi(-t)}{-h}.\label{eq:RELATIONQ=00003D-Q-1}
\end{equation}
Take the limit as $h\rightarrow0$ in (\ref{eq:RELATIONQ=00003D-Q-1})
and obtain

\begin{equation}
\varPsi(t)=lim_{h\rightarrow0}Q(t,h)=lim_{h\rightarrow0}Q(-t,-h)=\varPsi(-t).
\end{equation}

Assume that $I$ is the closed interval $[-L,L],$ and $t=L$ or $t=-L$.
Then, similar arguments as in the proof of iii) hold and the proof
is completed.
\end{proof}
\begin{lem}
Let $L>0$. Denote by $I$ an open interval $(-L,L)$ or a closed
interval $[-L,L]$. Assume that $\phi(t)\in C^{1}(I)$ . Then

a) $\phi(t)$ is an even function on $I$ iff its derivative is an
odd function on $I$ .

b) $\phi(t)$ is an odd function on $I$ iff its derivative is an
even function on $I$ .

c) Assume that $\phi(t)\in C^{0}(I)$. Let $0\leq s\leq u\leq t$
or let $t\leq u\leq s\leq0$ . Put
\begin{equation}
F(t):=\int_{0}^{t}\int_{0}^{u}\phi(s)dsdu.
\end{equation}

Then, $F(t)\in C^{2}(I)$ and $F(t)=F(-t)$ is an even function on
$I$ iff $\phi(t)\in C^{0}(I)$ and $\phi(t)=\phi(-t)$ is an even
function on $I$ .

$F(t)\in C^{2}(I)$ and $-F(t)=F(-t)$ is an odd function on $I$
iff $\phi(t)\in C^{0}(I)$ and $-\phi(t)=\phi(-t)$ is an odd function
on $I$ .
\end{lem}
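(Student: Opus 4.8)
The plan is to treat (a) and (b) as the two-directional sharpening of parts iii) and iv) of the preceding lemma, and to obtain (c) by iterating that lemma together with the Fundamental Theorem of Calculus.

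First I would dispose of the easy implications in (a) and (b). The directions ``$\phi$ even $\Rightarrow$ $\phi'$ odd'' and ``$\phi$ odd $\Rightarrow$ $\phi'$ even'' are exactly parts iii) and iv) of the preceding lemma, so nothing new is needed there. For the converses I would use the standard device of forming an auxiliary combination and showing it is constant. To prove ``$\phi'$ odd $\Rightarrow$ $\phi$ even'', set $g(t):=\phi(t)-\phi(-t)$; then $g'(t)=\phi'(t)+\phi'(-t)$, which vanishes identically because $\phi'$ is odd, so $g$ is constant on $I$, and since $g(0)=0$ we conclude $\phi(t)=\phi(-t)$. For ``$\phi'$ even $\Rightarrow$ $\phi$ odd'' I would likewise set $h(t):=\phi(t)+\phi(-t)$ and compute $h'(t)=\phi'(t)-\phi'(-t)\equiv 0$, so $h$ is constant equal to $h(0)=2\phi(0)$.

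This last computation is where the genuine obstacle lies: the constant $h(0)=2\phi(0)$ need not vanish, and indeed $\phi(t)=t+1$ has the even derivative $\phi'\equiv 1$ while failing to be odd. Hence the converse in (b) holds precisely when $\phi(0)=0$ (equivalently, when the constant of integration is zero); I would add this normalization hypothesis or record it explicitly, noting that it is automatic in the intended application, where the odd solution satisfies $y(t_0)=0$.

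Finally, for (c) I would exploit that, by the Fundamental Theorem of Calculus, $F$ is a twofold antiderivative of $\phi$: writing $G(u):=\int_0^u\phi(s)\,ds$ one has $F'=G$ and $F''=\phi$, so $\phi\in C^0(I)$ forces $F\in C^2(I)$. The symmetry equivalences then follow by applying the preceding lemma twice. For the forward direction, if $\phi$ is even then part ii) makes $G$ odd and then part i) makes $F=\int_0^{\,\cdot}G$ even; if $\phi$ is odd then part i) makes $G$ even and part ii) makes $F$ odd. For the converse I would differentiate: starting from $F\in C^2(I)$ even, part iii) gives $F'$ odd and part iv) gives $F''=\phi$ even, while $F$ odd gives $F'$ even and then $F''=\phi$ odd. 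A pleasant feature worth pointing out is that this converse invokes only the forward implications iii) and iv) of the preceding lemma, so it sidesteps entirely the constant-of-integration difficulty encountered in (b).
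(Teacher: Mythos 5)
Your proof is correct, and it is considerably more explicit than the paper's, which disposes of this lemma with the single sentence ``The proof follows easily from Lemma 1.'' The route the authors presumably intend is to write $\phi(t)=\phi(0)+\int_{0}^{t}\phi'(s)\,ds$ and invoke parts i) and ii) of Lemma 1 for the converse directions of (a) and (b); your auxiliary-function argument ($g(t):=\phi(t)-\phi(-t)$, $h(t):=\phi(t)+\phi(-t)$, each with identically vanishing derivative) is an equivalent and equally elementary alternative. More importantly, you have put your finger on a genuine defect in the lemma \emph{as stated}: the converse in (b) is false without the normalization $\phi(0)=0$. Your example $\phi(t)=t+1$ works, and even $\phi(t)\equiv 1$ does --- its derivative is the zero function, which is even, yet $\phi$ is not odd. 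The same leak appears in the integral formulation: $\phi(0)+\int_{0}^{t}\phi'(s)\,ds$ is odd only if the constant $\phi(0)$ vanishes. You are also right that the hypothesis is harmless in the intended application (Theorem 11 only ever uses the converse of (b) for functions vanishing at the origin, and part (c), whose converse directions differentiate rather than integrate, is immune to the constant-of-integration issue). The one thing I would ask you to add is the brief regularity remark for part (c): that $F\in C^{2}(I)$ follows from $\phi\in C^{0}(I)$ by two applications of the Fundamental Theorem of Calculus, with one-sided derivatives at $\pm L$ when $I$ is closed, as in the paper's Remark preceding Lemma 10 --- you state $F'=G$ and $F''=\phi$ but should say a word about why these hold up to the endpoints.
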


\begin{proof}
The proof follows easily from Lemma 1.
\end{proof}
We clarify now what is an even and an odd function of a scalar function
$u=$$H(y_{1},y_{2},\cdots,y_{N})$ of several variables. To this
end we denote the transposed column vector $y^{T}=(y_{1},y_{2},\cdots,y_{N})$
and we put $u=$$H(y_{1},y_{2},\cdots,y_{N})=H(y)$.
\begin{defn}
Denote by $REG$ an open connected set in $\mathbb{R}^{N}$ . We say
that $H(y)$ is an even function of $y$ in $REG$ if
\begin{equation}
H(y)=H(-y),\:y\in REG.\label{eq:EVEN(VECTORy)}
\end{equation}

We say that $H(y)$ is an odd function of $y$ in $REG$ if
\end{defn}

\begin{equation}
-H(y)=H(-y),\:y\in REG.\label{eq:ODD(VECTORy)}
\end{equation}
This definition is different then requiring that $u=$$H(y_{1},y_{2},\cdots,y_{N})=H(y)$
be an even or an odd function in each individual coordinate $y_{j}$
. In order to bring out the difference we add the following.
\begin{defn}
Denote by $REG$ an open connected set in $\mathbb{R}^{N}$ . We say
that $H(y)$ is an even function of $y$ in $REG$ in the strict sense
if
\begin{equation}
H(y_{1},y_{2},\cdots,y_{j},\cdots,y_{N})=H(y_{1},y_{2},\cdots,-y_{j},\cdots,y_{N}),\:y\in REG,\:j=1,2,\cdots,N.\label{eq:EVEN(VECTORy)-1}
\end{equation}

We say that $H(y)$ is an odd function of $y$ in $REG$ in the stricter
sense if

\begin{equation}
-H(y_{1},y_{2},\cdots,y_{j},\cdots,y_{N})=H(y_{1},y_{2},\cdots,-y_{j},\cdots,y_{N}),\:y\in REG,\:j=1,2,\cdots,N.\label{eq:ODD(VECTORy)-1}
\end{equation}
\end{defn}

Consider the following functions 
\begin{equation}
H(y_{1},y_{2}):=y_{1}^{5}y_{2}^{3},\:L(y_{1},y_{2})=y_{1}^{10}y_{2}^{6}.\label{eq:EXAMPLE1MULTIVARIABLES}
\end{equation}
 Evidently, $H(y_{1},y_{2}):=y_{1}^{5}y_{2}^{3}$ is an even function
in $REG:=\mathbb{R}^{2}$ . However, it is an odd function in the
strict sense in $REG:=\mathbb{R}^{2}$. Evidently, $L(y_{1},y_{2})=y_{1}^{10}y_{2}^{6}$
is an even function in $REG:=\mathbb{R}^{2}$ and it is also an even
function in the strict sense in $REG:=\mathbb{R}^{2}$.
\begin{rem}
The reader may want to consider a multinomial in the $(r+w)$ independent
variables $y_{1},y_{2},$$\cdots y_{j}\cdots y_{r},y_{r+1},y_{r+2},\cdots,y_{r+w}$
\begin{equation}
H(y)=[y_{1}^{[2e_{1}+1]}y_{2}^{[2e_{2}+1]}\cdots y_{j}^{[2e_{j}+1]}\cdots y_{r}^{[2e_{r}+1]}][y_{r+1}^{[2c_{1}]}y_{r+2}^{[2c_{2}]}\cdots y_{r+w}^{[2c_{w}]}]\label{eq:MULTINOMIAL}
\end{equation}
where $e_{1},e_{2,}\cdots e_{r},c_{1},c_{2},\cdots c_{w}$$\in\mathbb{N}_{0},\;,r,w\in\mathbb{N}\;$
. Formulation of necessary and sufficient conditions on the powers
occurring in $H(y)$ such that a) $H(y)$ is an even multivariate
function b) $H(y)$ is an even multivariate function in the strict
sense c) $H(y)$ is an odd multivariate function d) $H(y)$ is an
odd multivariate function in the strict sense, could further clarify
the difference between these two types of symmetry.
\end{rem}

Next we formulate an analog to Lemma 1 for multi variate functions.
\begin{lem}
Let $H(y)\in C^{1}(REG)$.

i) Assume that $H(y)=H(-y),$$\:y\in REG$. Then the partial derivatives
\[
\varPsi_{j}(y):=\frac{\partial H(y)}{\partial y_{j}},\:j=1,2,\cdots,N
\]

are odd function in $REG$.

ii) Assume that $-H(y)=H(-y),$$\:y\in REG$. Then the partial derivatives
\[
\varPsi_{j}(y):=\frac{\partial H(y)}{\partial y_{j}},\:j=1,2,\cdots,N
\]

are even functions in $REG$.

iii) Assume $f(y)$ to be a column vector function, $f^{T}(y):=[f_{1}(y),f_{2}(y),\cdots,f_{n}(y)]$
, where $f_{j}(y),\:j=1,2,\cdots,n$ are the scalar component of $f(y)$
such that $f_{j}(y)\in C^{1}(REG)$. Then,
\begin{equation}
f(y)=f(-y),\:y\in REG\Longrightarrow\varPsi(y):=\frac{\partial f(y)}{\partial y_{j}}=-\varPsi(-y):=-\frac{\partial f(-y)}{\partial y_{j}},\:j=1,2,\cdots,N.\label{eq:DERVECTORFUNCAREODDif=00005Bf=00005Deven}
\end{equation}

Moreover,

\begin{equation}
-f(y)=f(-y),\:y\in REG\Longrightarrow\varPsi(y):=\frac{\partial f(y)}{\partial y_{j}}=\varPsi(-y):=\frac{\partial f(-y)}{\partial y_{j}},\:j=1,2,\cdots,N.\label{eq:DERVECFUNCTIONSAREEVENIF=00005Bf=00005DisODD}
\end{equation}
\end{lem}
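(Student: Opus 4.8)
My plan is to prove all parts by differentiating the symmetry identity directly, using the chain rule for the change of variable $y\mapsto-y$. Throughout I take $REG$ to be symmetric about the origin, i.e. $y\in REG\iff-y\in REG$, which is implicit in the statement since otherwise $H(-y)$ and $\varPsi_{j}(-y)$ need not be defined on $REG$. For part i) I would start from the hypothesis $H(y)=H(-y)$ for all $y\in REG$ and differentiate both sides with respect to $y_{j}$. The left side yields $\varPsi_{j}(y)$. For the right side, writing $w=-y$ and applying the chain rule gives $\frac{\partial}{\partial y_{j}}H(-y)=\sum_{k=1}^{N}\frac{\partial H}{\partial w_{k}}(-y)\frac{\partial(-y_{k})}{\partial y_{j}}=-\frac{\partial H}{\partial w_{j}}(-y)=-\varPsi_{j}(-y)$, since $\frac{\partial(-y_{k})}{\partial y_{j}}=-\delta_{jk}$. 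Equating the two sides gives $\varPsi_{j}(y)=-\varPsi_{j}(-y)$, so each $\varPsi_{j}$ is odd. For part ii) I would repeat the identical differentiation starting from $-H(y)=H(-y)$: the left side becomes $-\varPsi_{j}(y)$, the right side is again $-\varPsi_{j}(-y)$ by the same chain-rule computation, and cancelling the common sign yields $\varPsi_{j}(y)=\varPsi_{j}(-y)$, so each $\varPsi_{j}$ is even.

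Alternatively, to stay closer in spirit to Lemma 1, I would run the difference-quotient argument coordinatewise. Fixing $y$ and forming $Q_{j}(y,h):=\frac{H(y+he_{j})-H(y)}{h}$, where $e_{j}$ is the $j$-th standard basis vector, the even hypothesis lets me rewrite the numerator as $H(-y-he_{j})-H(-y)$, which I recognize as $-h$ times the difference quotient of $H$ at the point $-y$ in the direction $e_{j}$ with increment $-h$. Passing to the limit $h\rightarrow0$ then reproduces $\varPsi_{j}(y)=-\varPsi_{j}(-y)$, exactly mirroring the passage used for $Q(t,h)$ in the proof of Lemma 1 iii). The $C^{1}(REG)$ hypothesis guarantees that these limits exist; were $REG$ replaced by a set with boundary, one would handle the endpoints with one-sided increments precisely as in Lemma 1.

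Part iii) then follows with no new work, since the stated identities are simply the vectorial packaging of parts i) and ii) applied to each scalar component $f_{k}$. If $f(y)=f(-y)$ holds as an identity of vectors, then $f_{k}(y)=f_{k}(-y)$ for every $k$, so by i) each $\frac{\partial f_{k}}{\partial y_{j}}$ is odd, whence the vector $\frac{\partial f}{\partial y_{j}}$ is odd; likewise the hypothesis $-f(y)=f(-y)$ gives evenness of $\frac{\partial f}{\partial y_{j}}$ through ii) in the same componentwise fashion.

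I do not anticipate a serious obstacle: the entire content is the chain-rule bookkeeping of the single minus sign produced by the reflection $y\mapsto-y$. The only points demanding care are the (implicit) symmetry of $REG$ noted above, and the mild notational shift in part iii), where the symbol $\varPsi$ denotes a \emph{vector} rather than the scalar $\varPsi_{j}$ of parts i) and ii); I would simply read that identity componentwise.
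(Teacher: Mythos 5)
Your proposal is correct; your second (difference-quotient) argument is precisely the paper's own proof, which forms $Q_{j}(y,h)$, uses the symmetry hypothesis to obtain $Q_{j}(y,h)=\mp Q_{j}(-y,-h)$, and passes to the limit, with part iii) read componentwise exactly as you describe. The chain-rule computation you lead with is an equivalent repackaging of the same single minus sign, so there is no substantive difference in approach.
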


\subjclass[2000]{We first prove i) and we focus on the quotient below: with $y\in REG$
; with $h\neq0$ ; and with $h$ arbitrarily small.
\begin{equation}
Q_{j}(y,h):=\frac{H(y_{1},y_{2},\cdots,y_{j-1},y_{j}+h,y_{j+1},\cdots,y_{N})-H(y_{1},y_{2},\cdots,y_{j-1},y_{j},y_{j+1},\cdots,y_{N})}{h}.\label{eq:Quotient1-2}
\end{equation}
Put as short hand notation
\begin{equation}
\widehat{H}(y_{j}+h):=H(y_{1},y_{2},\cdots,y_{j-1},y_{j}+h,y_{j+1},\cdots,y_{N}),\label{eq:H(y_j+h)Abbreviation}
\end{equation}
\begin{equation}
\widehat{H}(-y_{j}-h):=H(-y_{1},-y_{2},\cdots,-y_{j-1},-y_{j}-h,-y_{j+1},\cdots,-y_{N}).\label{eq:H(y_j+h)Abbreviation-1}
\end{equation}
}
\begin{proof}
Since $H(y)$ is an even function then
\begin{equation}
H(y_{1},y_{2},\cdots,y_{j-1},y_{j}+h,y_{j+1},\cdots,y_{N})=H(-y_{1},-y_{2},\cdots,-y_{j-1},-y_{j}-h,-y_{j+1},\cdots,-y_{N})\label{eq:QUOTIENTCHNGEDBYSYMMETRY1-2}
\end{equation}
\[
H(y_{1},y_{2},\cdots,y_{j-1},y_{j},y_{j+1},\cdots,y_{N})=H(-y_{1},-y_{2},\cdots,-y_{j-1},-y_{j},-y_{j+1},\cdots,-y_{N})
\]
Substitute from (\ref{eq:QUOTIENTCHNGEDBYSYMMETRY1-2}) and (\ref{eq:H(y_j+h)Abbreviation})
and (\ref{eq:H(y_j+h)Abbreviation-1}) into the right hand side of
(\ref{eq:Quotient1-2}) to obtain 
\begin{equation}
Q_{j}(y,h)=-Q_{j}(-y,-h)=-\frac{\widehat{H}(-y_{j}-h)-\widehat{H}(-y_{j})}{-h}.\label{eq:RELATIONQ=00003D-Q-2}
\end{equation}
 Take the limit as $h\rightarrow0$ in (\ref{eq:RELATIONQ=00003D-Q-2})
and obtain

\begin{equation}
\varPsi(y):=\frac{\partial H(y)}{\partial y_{j}}=lim_{h\rightarrow0}Q_{j}(y,h)=-lim_{h\rightarrow0}Q_{j}(-y,-h)=-\varPsi(-y).
\end{equation}

Next we prove ii). We focus again on the quotient 
\begin{equation}
Q_{j}(y,h):=\frac{H(y_{1},y_{2},\cdots,y_{j-1},y_{j}+h,y_{j+1},\cdots,y_{N})-H(y_{1},y_{2},\cdots,y_{j-1},y_{j},y_{j+1},\cdots,y_{N})}{h}.\label{eq:Quotient1-1-1}
\end{equation}
Since $H(y)$ is an odd function then

\begin{equation}
H(y_{1},y_{2},\cdots,y_{j-1},y_{j}+h,y_{j+1},\cdots,y_{N})=\label{eq:QUOTIENTCHNGEDBYSYMMETRY1-2-1}
\end{equation}
\[
-H(-y_{1},-y_{2},\cdots,-y_{j-1},-y_{j}-h,-y_{j+1},\cdots,-y_{N}).
\]
\[
H(y_{1},y_{2},\cdots,y_{j-1},y_{j},y_{j+1},\cdots,y_{N})=-H(-y_{1},-y_{2},\cdots,-y_{j-1},-y_{j},-y_{j+1},\cdots,-y_{N}).
\]
Substitute from (\ref{eq:QUOTIENTCHNGEDBYSYMMETRY1-2-1}) into the
right hand side of (\ref{eq:Quotient1-1-1}) to obtain 
\begin{equation}
Q_{j}(y,h)=Q_{j}(-y,-h).\label{eq:RELATIONQ=00003D-Q-1-1}
\end{equation}
Take the limit as $h\rightarrow0$ in (\ref{eq:RELATIONQ=00003D-Q-1-1})
and obtain

\begin{equation}
\varPsi(y):=\frac{\partial H(y)}{\partial y_{j}}=lim_{h\rightarrow0}Q_{j}(y,h)=lim_{h\rightarrow0}Q_{j}(-y,-h)=\varPsi(-y).\label{eq:ODDDERH}
\end{equation}

The proof of iii) follows from the proofs of i) and ii) and the definition
of $f(y)$.
\end{proof}
We adopt the following notations and conventions under the umbrella
of the following definition.
\begin{defn}
A norm on $\mathbb{R}^{n}$ is denoted by $\left|.\right|$ . The
Euclidean norm is denoted by $\left\Vert .\right\Vert $ . In here
we employ any norm $\left|.\right|$ that also satisfies for $A$
an $m$ by $n$ matrix and $y$ an $n$ by 1 column vector, the following
inequality
\begin{equation}
\left|Ay\right|\leq\left|A\right|\left|y\right|.\label{eq:Extranormrequire}
\end{equation}
Put for a certain continuous function $\phi_{0}(t)\in\mathbb{R}^{n}$

\begin{equation}
D:=\{y\in\mathbb{\mathbb{R}}^{n}\left|y-\phi_{0}(t)\right|\leq b,\;b\geq0\}.\label{eq:HARRYDISK}
\end{equation}

Assume that 
\begin{equation}
\phi_{0}(t)\in C(I);\phi(t)\in C(I);\phi(t)\in D;f(y)\in C(D).\label{eq:ASSUMPTIONSINTEGRALEQ}
\end{equation}
We say that $\phi(t)$ is a solution of the integral equation (\ref{eq:HARRYINTEGRALEQ})
if $\phi(t)$ satisfies the identity relation 
\begin{equation}
\phi(t)\equiv\phi_{0}(t)+\int_{0}^{t}\int_{0}^{u}f(\phi(s))dsdu,\;t\in I.\label{eq:HARRYINTEGRALEQ}
\end{equation}
\end{defn}

The next stage we show how the solution of the initial value problem
(\ref{eq:HARRYSYMMETRICIVP1}) below is equivalent to the solution
of the integral equation (\ref{eq:HARRYINTEGRALEQ}). To this end
we need the following definition.
\begin{defn}
We say that $y=\phi(t)$ is a solution to the initial value problem
(\ref{eq:HARRYSYMMETRICIVP1})

\begin{equation}
y^{\prime\prime}=\frac{d^{2}y}{dt^{2}}=f(y),\;y(0)=y_{0},\thinspace\frac{dy_{j}(0)}{dt}=\eta\label{eq:HARRYSYMMETRICIVP1}
\end{equation}

in $D$ on the interval $I$ if: 
\begin{equation}
y=\phi(t)\in C^{2}(I);y=\phi(t)\in D;f(y)\in C(D)\label{eq:ASSUMPTIONSIVP}
\end{equation}
and $\phi(t)$ satisfies the following identity together with the
initial conditions below. Namely,
\begin{equation}
\phi{}^{\prime\prime}(t)\equiv f(\phi(t)),\,t\in I,\:\phi(0)=y_{0},\phi'(0)=\eta.\label{eq:HARRYINITIALVALUEPROBLEM.}
\end{equation}
\end{defn}

\begin{rem}
We note that at the end points of the interval $I$ the condition
$y=\phi(t)\in C^{2}(I)$ means that the following limits exist
\begin{equation}
\phi'(L):=lim_{h\rightarrow0^{+}}\frac{\phi(L-h)-\phi(L)}{h},\:\phi'(-L):=lim_{h\rightarrow0^{+}}\frac{\phi(-L+h)-\phi(-L)}{h},\label{eq:HARRYDERonesidedlimits}
\end{equation}
\begin{equation}
\phi''(L):=lim_{h\rightarrow0^{+}}\frac{\phi'(L-h)-\phi'(L)}{h},\:\phi''(-L):=lim_{h\rightarrow0^{+}}\frac{\phi'(-L+h)-\phi'(-L)}{h}.\label{eq:HARRY2NDORDERDERonesidedLIMITS}
\end{equation}
\end{rem}

Next we prove
\begin{lem}
Assume: 
\begin{equation}
I=[-L,L];\:y=\phi(t)\in C^{2}(I);\;y=\phi(t)\in D;\;\phi_{0}(t)=y_{0}+t\eta.\label{eq:CONDITIONSLEMMA10}
\end{equation}
 Then, $\phi(t)$ is a solution of (\ref{eq:HARRYSYMMETRICIVP1})
iff it is a solution of (\ref{eq:HARRYINTEGRALEQ}).
\end{lem}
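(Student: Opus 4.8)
The plan is to establish the equivalence by proving the two implications separately, using the Fundamental Theorem of Calculus as the single essential tool, exactly as in the classical reduction of a second order initial value problem to an integral equation. Throughout I would exploit the standing hypotheses $\phi(t)\in C^{2}(I)$, $\phi(t)\in D$, and $f(y)\in C(D)$, which together guarantee that the composite map $s\mapsto f(\phi(s))$ is continuous on $I$; this continuity is precisely what legitimizes differentiating the iterated integral in (\ref{eq:HARRYINTEGRALEQ}), and it is worth isolating as the first observation.

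First I would prove that a solution of the initial value problem (\ref{eq:HARRYSYMMETRICIVP1}) solves the integral equation (\ref{eq:HARRYINTEGRALEQ}). Starting from $\phi''(t)=f(\phi(t))$ and integrating once from $0$ to $u$, the initial condition $\phi'(0)=\eta$ yields $\phi'(u)=\eta+\int_{0}^{u}f(\phi(s))\,ds$. Integrating this identity a second time from $0$ to $t$ and invoking $\phi(0)=y_{0}$ produces $\phi(t)=y_{0}+t\eta+\int_{0}^{t}\int_{0}^{u}f(\phi(s))\,ds\,du$. Since $\phi_{0}(t)=y_{0}+t\eta$ by hypothesis, this is exactly (\ref{eq:HARRYINTEGRALEQ}).

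For the converse I would assume $\phi$ satisfies (\ref{eq:HARRYINTEGRALEQ}) and recover the initial value problem by differentiation. Evaluating the identity at $t=0$ collapses the double integral and gives $\phi(0)=\phi_{0}(0)=y_{0}$. Because $f(\phi(\cdot))$ is continuous, the inner integral $G(u):=\int_{0}^{u}f(\phi(s))\,ds$ is $C^{1}$ with $G'(u)=f(\phi(u))$; hence the outer integral is differentiable, and differentiating (\ref{eq:HARRYINTEGRALEQ}) once gives $\phi'(t)=\eta+\int_{0}^{t}f(\phi(s))\,ds$, from which $\phi'(0)=\eta$. A second differentiation, again by the Fundamental Theorem of Calculus, yields $\phi''(t)=f(\phi(t))$, which is the differential equation in (\ref{eq:HARRYSYMMETRICIVP1}).

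The one delicate point, and the step I expect to require the most care, is the behavior at the endpoints $t=\pm L$ of the closed interval $I=[-L,L]$, where the derivatives $\phi'$ and $\phi''$ must be read as the one-sided limits recorded in the preceding Remark. There the Fundamental Theorem of Calculus must be applied in its one-sided form, and one must verify that the iterated integrals remain well defined and that the resulting one-sided derivatives indeed equal $\eta$ and $f(\phi(\pm L))$. This is routine once the continuity of $f(\phi(\cdot))$ on the closed interval is in hand, but it is the place where the argument departs from the familiar open-interval version and should be stated explicitly.
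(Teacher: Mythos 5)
Your proposal is correct and follows essentially the same route as the paper: integrate $\phi''=f(\phi)$ twice with the initial conditions for the forward direction, and use continuity of $s\mapsto f(\phi(s))$ together with the Fundamental Theorem of Calculus to differentiate the iterated integral twice for the converse. Your explicit attention to the one-sided derivatives at $t=\pm L$ is a point the paper delegates to the preceding Remark, but it does not change the argument.
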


\begin{proof}
Assume $y=\phi(t)\in C^{2}(I)$ is a solution to the initial value
problem (\ref{eq:HARRYSYMMETRICIVP1}) . Then, we may integrate both
sides of (\ref{eq:HARRYSYMMETRICIVP1}) to yield
\begin{equation}
\phi^{\prime}(t)=\eta+\int_{0}^{t}f(\phi(s))ds.\label{eq:HARRYDEREQUATION}
\end{equation}
 An integration of (\ref{eq:HARRYDEREQUATION}) yields 
\begin{equation}
\phi(t)-\phi_{0}(t)=\int_{0}^{t}\int_{0}^{u}f(\phi(s))dsdu.\label{eq:HarrySYMMETRICINTEGRALEQ}
\end{equation}

Thus, $\phi(t)$ is a solution of (\ref{eq:HARRYINTEGRALEQ}). We
proceed with the proof of the converse statement. If $\phi(t)\in C(I)\quad and\quad\phi(t)\in D$
, then $f(\phi(s))$ is well defined on $I$, $f(\phi(s))\in C(I)$
as a composition of continuous functions on $I$ and $\int_{0}^{t}\int_{0}^{u}f(\phi(s))dsdu\in C^{2}(I)$.
Consequently, the right hand side of (\ref{eq:HARRYINTEGRALEQ}) is
twice continuously differentiable on the interval $I$. Consequently,
$\phi(t)$ on the left hand side of (\ref{eq:HARRYINTEGRALEQ}), is
twice continuously differentiable on $I$ . Hence, $\phi(0)=y_{0}$
and $\phi'(0)=\eta$ and $\phi(t)$ is a solution of (\ref{eq:HARRYSYMMETRICIVP1})
as desired yield (\ref{eq:HARRYSYMMETRICIVP1}) with the desired properties.
\end{proof}

\section{Symmetric solutions via successive approximations}

We are ready to formulate and prove the main theorem.
\begin{thm}
Assume that: 
\begin{equation}
t\in I;\:y_{0},\eta,y,f(y)\in\mathbb{\mathbb{R}}^{n},\:n\in\mathbb{N},\phi_{0}(t)=y_{0}+t\eta,\:f(y)\in D.\label{eq:CONDITIONSTHOREM11}
\end{equation}
Assume that $f(y)$ satisfies the Lipchitz condition in $D$. Namely,
there is a constant $K\geq0$ such that for any $\widehat{y},\widetilde{y}\in D$
the following holds

\begin{equation}
\left|f(\widehat{y})-f(\widetilde{y})\right|\leq K\left|\widehat{y}-\widetilde{y}\right|.\label{eq:HARRYLIPSHITZCONDITION}
\end{equation}
Assume also that 
\begin{equation}
D=\{y\bigl|\left\Vert y-\phi_{0}(t)\right\Vert \leq b\}\Longrightarrow\left\Vert f(y)\right\Vert \leq M.\label{eq:Boundedf(y)in a disk.-1}
\end{equation}
 Then, the initial value problem
\begin{equation}
y''=f(y),\;y(0)=y_{0},y'(0)=\eta,\label{eq:IVPY0ETAGENERAL}
\end{equation}

possesses a unique solution $y(t)$ for $\left|t\right|\leq\sqrt{\frac{2b}{M}}$.

The initial value problem

\begin{equation}
y''=f(y),\;y(0)=y_{0},y'(0)=\overrightarrow{0},\;\overrightarrow{0}^{T}:=[0,0,\cdots,0],\label{eq:INITIALVALUEPROBGENERAL-1}
\end{equation}

possesses a unique solution $y(t)$ for $\left|t\right|\leq\sqrt{\frac{2b}{M}}$
such that $y(t)\equiv y(-t)$.

If in addition $-f(y)=f(-y)$ holds for $y\in D$ , then the initial
value problem

\begin{equation}
y''=f(y),\;y(0)=\overrightarrow{0},y'(0)=\eta,\;\overrightarrow{0}^{T}:=[0,0,\cdots,0],\label{eq:INITIALVALUEODD}
\end{equation}

possesses a unique solution $y(t)$ for $\left|t\right|\leq\sqrt{\frac{2b}{M}}$
such that $-y(t)\equiv y(-t)$.

Moreover: for any $t_{0}\in\mathbb{R}$ , the initial value problem
\begin{equation}
y''=f(y),\;y(t_{0})=y_{0},y'(t_{0})=\overrightarrow{0},\;\overrightarrow{0}^{T}:=[0,0,\cdots,0],\label{eq:IVPt0EVEN}
\end{equation}

possesses a unique solution $y(t)$ for $\left|t-t_{0}\right|\leq\sqrt{\frac{2b}{M}}$
such that

$y[(t-t_{0})]\equiv y[-(t-t_{0})]$.

If in addition $-f(y)=f(-y)$ holds for $y\in D$ , then for any $t_{0}\in\mathbb{R}$
, the initial value problem
\begin{equation}
y''=f(y),\;y(t_{0})=\overrightarrow{0},y'(t_{0})=\eta,\;\overrightarrow{0}^{T}:=[0,0,\cdots,0],\label{eq:IVPt0ODD}
\end{equation}

possesses a unique solution $y(t)$ for $\left|t-t_{0}\right|\leq\sqrt{\frac{2b}{M}}$
such that

$-y[(t-t_{0})]\equiv y[-(t-t_{0})]$.

Furthermore, assume that $f(y)$ is defined in every disk $D$ in
(\ref{eq:Boundedf(y)in a disk.-1}), and that for all $b>0$ there
exists $M>0$ such that $\left|f(y)\right|\leq M$. Then, all solutions
to all initial values above exist globally on ($-\infty,\infty$).
\end{thm}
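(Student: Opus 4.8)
The plan is to bootstrap the local existence result already established in the earlier parts of Theorem 11 into global existence by letting the tube radius $b$ grow without bound. First I would work entirely through the integral equation (\ref{eq:HARRYINTEGRALEQ}), whose equivalence with the initial value problem (\ref{eq:HARRYSYMMETRICIVP1}) is furnished by Lemma 10, so that existence, uniqueness, and the symmetry identities are all expressed in one place. From that representation, as long as a solution remains in the tube $D=\{y:\left\Vert y-\phi_{0}(t)\right\Vert \leq b\}$ on which $\left\Vert f\right\Vert \leq M$, it obeys the a priori estimate
\begin{equation}
\left\Vert y(t)-\phi_{0}(t)\right\Vert =\left\Vert \int_{0}^{t}\int_{0}^{u}f(y(s))\,ds\,du\right\Vert \leq\frac{1}{2}Mt^{2},
\end{equation}
so the deviation from the free motion $\phi_{0}(t)=y_{0}+t\eta$ grows at most quadratically in $t$.

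Next I would fix an arbitrary target time $T>0$ and select the tube radius $b:=\frac{1}{2}MT^{2}$, where $M$ is the bound on $\left\Vert f\right\Vert$ afforded by the standing hypothesis on the corresponding disk. With this choice the local existence interval becomes $\sqrt{2b/M}=T$, whence the local part of the theorem already produces a unique solution on $[-T,T]$ that stays inside $D$. Since $T>0$ is arbitrary, the solution extends to all of $(-\infty,\infty)$. Equivalently, one may phrase this as a standard continuation argument: the set of $T$ for which a solution exists on $[-T,T]$ is nonempty, relatively open in $[0,\infty)$ (restart the local theorem at the endpoints, using the Lipschitz bound (\ref{eq:HARRYLIPSHITZCONDITION}) for uniqueness), and relatively closed (the quadratic estimate above prevents the solution from escaping every tube in finite time), hence equal to $[0,\infty)$.

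The symmetry assertions then persist on the whole line by uniqueness. For each $T$ the unique even solution on $[-T,T]$ satisfies $y(t)\equiv y(-t)$, and, when $-f(y)=f(-y)$, the unique odd solution satisfies $-y(t)\equiv y(-t)$, by the earlier parts of the theorem. Because the global solution restricts to the local one on each symmetric interval $[-T,T]$, these identities hold for all $t\in\mathbb{R}$; the translation to an arbitrary base point $t_{0}$ is handled exactly as before by replacing $t$ with $t-t_{0}$ in (\ref{eq:IVPt0EVEN}) and (\ref{eq:IVPt0ODD}).

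The step I expect to be the crux is the interplay between $b$ and $M$ in the inequality $\sqrt{2b/M}\geq T$. The argument is transparent when $\left\Vert f\right\Vert$ admits a single global bound $M$, since then $b/M\to\infty$ and $\sqrt{2b/M}$ is unbounded; but if $M=M(b)$ is permitted to increase with the tube radius, reaching arbitrarily large $T$ forces the requirement $b/M(b)\to\infty$, i.e. a sublinear growth condition on $f$. I would therefore read the ``furthermore'' hypothesis as a genuine uniform bound on $\left\Vert f\right\Vert$, and I would flag explicitly that, absent such control, the quadratic estimate by itself does not rule out finite-time escape.
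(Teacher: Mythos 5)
Your proposal addresses only the final ``Furthermore'' clause of the theorem. Everything before it --- local existence and uniqueness on $\left|t\right|\leq\sqrt{2b/M}$, the evenness of the solution when $y'(0)=\overrightarrow{0}$, the oddness when $y(0)=\overrightarrow{0}$ and $-f(y)=f(-y)$, and the translated statements about $t_{0}$ --- is invoked as ``the local part of the theorem already established,'' but those assertions are precisely what the theorem asks you to prove; there are no earlier parts available to cite. The paper's proof spends nearly all of its effort there: it defines the successive approximations $\phi_{j+1}(t)=\phi_{0}(t)+\int_{0}^{t}\int_{0}^{u}f(\phi_{j}(s))\,ds\,du$, shows by induction (splitting the cases $0\leq s\leq u\leq t$ and $t\leq u\leq s\leq0$ via a double change of variables) that $\left|\phi_{j+1}(t)-\phi_{0}(t)\right|\leq\frac{1}{2}Mt^{2}\leq b$ so each iterate stays in $D$, shows via Lemma 2 that evenness (resp.\ oddness, when $f$ is odd) of $\phi_{0}$ propagates to every $\phi_{j}$ and hence to the limit, and derives from the Lipschitz condition the majorant $\left|\phi_{j}(t)-\phi_{j-1}(t)\right|\leq\frac{M}{(2j)!K}K^{j}t^{2j}$, giving absolute and uniform convergence to a solution of the integral equation, which is equivalent to the initial value problem by Lemma 10. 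None of this machinery appears in your proposal, so as written it is circular rather than a proof of the stated theorem.

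On the part you do treat, your argument is sound and in fact more careful than the paper's one-line remark that global existence ``follows by letting $b\rightarrow\infty$'': choosing $b=\frac{1}{2}MT^{2}$ so that $\sqrt{2b/M}=T$ and patching the solutions together by uniqueness is exactly the right way to make that precise, and your observation that the hypothesis must be read as a bound $M$ uniform in $b$ is correct and worth flagging --- if $M=M(b)$ were allowed to grow superlinearly in $b$ the conclusion fails (e.g.\ $y''=y^{3}$ blows up in finite time although $f$ is bounded on every disk). But this salvages only the last sentence of the theorem; you still need to supply the successive-approximation construction and the symmetry induction that constitute its core.
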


\begin{proof}
We utilize the method of successive approximations e.g. \cite{P.F.Hsieh=000026Y. Sibuya}
Ch. 1 and \cite{BRAUERNOHEL} Ch. 3 and we adapt and specialize it
to the differential system $y''=f(y)$. Denote by $\phi_{j}(t)$ the
successive approximations defined by (\ref{eq:SUCCESSIVE APPROXIMATIONS})
below.
\begin{equation}
\phi_{j+1}(t)-\phi_{0}(t)=\int_{0}^{t}\int_{0}^{u}f(\phi_{j}(s))dsdu;\thinspace j=0,1,2,\ldots,.\label{eq:SUCCESSIVE APPROXIMATIONS}
\end{equation}

Assume that 
\begin{equation}
0\leq s\leq u\leq t\leq\sqrt{\frac{2b}{M}}\quad or\quad that\quad-\sqrt{\frac{2b}{M}}\leq t\leq u\leq s\leq0.\label{eq:DODEFINITIONofINTEGRALS}
\end{equation}
We proceed to show by induction the following three properties of
the successive approximations $\phi_{j}(t)\;j=0,1,2,\ldots,$. We
show that:

i) All $\phi_{j}(t)\in D$ . Namely, that
\begin{equation}
\left|\phi_{j}(t)-\phi_{0}(t)\right|\leq b,\quad if\quad t\in I:=[-\sqrt{\frac{2b}{M}},\sqrt{\frac{2b}{M}}].\label{eq:HARRYDOMAINofDEFINITION-1}
\end{equation}

ii) We show that 
\begin{equation}
\phi_{0}(t)\equiv\phi_{0}(-t)\Longrightarrow\phi_{j}(t)\equiv\phi_{j}(-t),\;j=1,2,\ldots,t\in I:=[-\sqrt{\frac{2b}{M}},\sqrt{\frac{2b}{M}}].\label{eq:HARRYEVENPHIJ}
\end{equation}

iii) We show that if $f(y)$ is an odd function, namely
\begin{equation}
-f(y)\equiv f(-y),\;y\in D,\label{eq:HARRYf(y)EVENINRECbox}
\end{equation}

then 
\begin{equation}
-\phi_{0}(t)\equiv\phi_{0}(-t)\Longrightarrow-\phi_{j}(t)\equiv\phi_{j}(-t),\;j=1,2,\ldots,t\in I.\label{eq:HARRYODDPHIJt}
\end{equation}

We proceed with the proofs. Evidently, for $j=0$ 
\begin{equation}
\phi_{0}(t)\in D\quad because\quad\left|\phi_{0}(t)-\phi_{0}(t)\right|\equiv0\leq b.\label{eq:HARRYPHI0INDOD}
\end{equation}
For $\phi_{0}(t)$ all three conditions i), ii) and iii) hold trivially.
Assume by induction that $\phi_{j}(t)$ satisfies (\ref{eq:HARRYDOMAINofDEFINITION-1}).
We desire to prove that
\begin{equation}
\left|\phi_{j+1}(t)-\phi_{0}(t)\right|\leq b,\quad if\quad t\in I:=[-\sqrt{\frac{2b}{M}},\sqrt{\frac{2b}{M}}].\label{eq:HARRYDODforphi(j+1)}
\end{equation}

By definition (\ref{eq:SUCCESSIVE APPROXIMATIONS}) of $\phi_{j+1}$
, we have with $0\leq s\leq u\leq t$ the desired estimate
\begin{equation}
\left|\phi_{j+1}(t)-\phi_{0}(t)\right|\leq\int_{0}^{t}\int_{0}^{u}Mdsdu\leq\frac{1}{2}Mt^{2}\leq\frac{1}{2}M(\frac{2b}{M})=b.\label{eq:HARRYPHIJ+1inRECBOX.}
\end{equation}

If $t\leq u\leq s\leq0$ then substitute in the double integral $I_{j}:=\int_{0}^{t}\int_{0}^{u}f(\phi_{j}(s))dsdu$
\begin{equation}
s=-v_{1}\Longrightarrow\:ds=-dv_{1},\:and\:\{s=0\Longleftrightarrow v_{1}=0\},\:and\:\{s=u\Rightarrow v_{1}=-u\}\label{eq:HARRY!STsubstituteDoubleIntegral}
\end{equation}

and obtain 
\begin{equation}
I_{j}:=\int_{0}^{t}[\int_{0}^{u}f(\phi_{j}(s))ds]du=\int_{0}^{t}[\int_{0}^{-u}f(\phi_{j}(-v_{1}))(-dv_{1})]du.\label{eq:HARRYCHANGE1VARDI}
\end{equation}

Change again variables in (\ref{eq:HARRYCHANGE1VARDI}) as follows
\begin{equation}
u=-v_{2}\Longrightarrow\:du=-dv_{2},\:and\:\{u=0\Longleftrightarrow v_{2}=0\},\:and\:\{u=t\Rightarrow v_{2}=-t\}.\label{eq:HARRYFIRSTVARIABLESCHANGE}
\end{equation}

Then, 
\begin{equation}
I_{j}:=\int_{0}^{t}[\int_{0}^{u}f(\phi_{j}(s))ds]du=\int_{0}^{t}[\int_{0}^{-u}f(\phi_{j}(-v_{1}))(-dv_{1})]du=\label{eq:HARRY2NDCHANGEVARINIJ}
\end{equation}
\[
\int_{0}^{-t}[\int_{0}^{v_{2}}f(\phi_{j}(-v_{1}))(-dv_{1})](-dv_{2}).
\]

In sum 
\begin{equation}
-\sqrt{\frac{2b}{M}}\leq t\leq u\leq s\leq0\Longleftrightarrow0\leq v_{1}\leq v_{2}\leq-t\leq\sqrt{\frac{2b}{M}}.\label{eq:HARRYsum2VARIABLESChange}
\end{equation}

By virtue of two changes of variables we obtain two non negative upper
limits in (\ref{eq:HARRY2NDCHANGEVARINIJ}) . This permits us to estimate
the norm,
\begin{equation}
\left|I_{j}\right|=\left|\phi_{j+1}(t)-\phi_{0}(t)\right|=\left|\int_{0}^{t}[\int_{0}^{u}f(\phi_{j}(s))ds]du\right|\leq\int_{0}^{-t}[\int_{0}^{v_{2}}\left|f(\phi_{j}(-v_{1}))\right|dv_{1}dv_{2}\leq\label{eq:PHIJ+1ESTIMATEFORNEGATIVEt.}
\end{equation}
\[
\leq\int_{0}^{-t}\int_{0}^{v_{2}}Mdv_{1}dv_{2}\leq\frac{1}{2}M(-t)^{2}\leq\frac{1}{2}M(\frac{2b}{M})=b,
\]

as desired. In order to complete the proof for ii) and iii) above,
we observe that if $\phi(s)=\phi(-s)$ for $s\in I$ and $\phi(s)\in D$
then $f(\phi(s))=f(\phi(-s))$ . Hence $f(\phi(s))$ is also an even
function of $s$ for $s\in I$. If

$\phi_{0}(s)=\phi_{0}(-s)$ for $s\in I$ then by Lemma 2, all functions
in (\ref{eq:HARRYDOBLEINTEGRALandPHIJEVEN}) are even functions for
$s\in I$ as desired. Namely,
\begin{equation}
\int_{0}^{t}[\int_{0}^{u}f(\phi_{0}(s))ds]du,\;\int_{0}^{t}[\int_{0}^{u}f(\phi_{j}(s))ds]du,\label{eq:HARRYDOBLEINTEGRALandPHIJEVEN}
\end{equation}
\[
\phi_{j}(s):=\phi_{0}(s)+\int_{0}^{t}[\int_{0}^{u}f(\phi_{j-1}(s))ds]du,\;j=1,2,\ldots,.
\]

We turn to successive approximations that are odd functions. Notice
that 
\begin{equation}
-\phi(s)=\phi(-s)\quad s\in I,\quad-f(y)=f(-y),\;\phi(s),y\in D\Longrightarrow\label{eq:HARRYDOUBLEINTEGRALSPKIODD}
\end{equation}
\[
-f(\phi(s))=-f(-\phi(-s))=f(\phi(-s)).
\]

Therefore, if $-\phi_{0}(s)=\phi_{0}(-s)$ for $s\in I$ , then by
Lemma 2, all expressions in (\ref{eq:HARRYDOUBLEINTEGRALSPKIODD})
are odd functions for $s\in I$ as desired. Namely, 
\begin{equation}
\int_{0}^{t}[\int_{0}^{u}f(\phi_{0}(s))ds]du,\;\int_{0}^{t}[\int_{0}^{u}f(\phi_{j}(s))ds]du,\label{eq:LISTofEntities}
\end{equation}
\begin{equation}
\phi_{j}(s):=\phi_{0}(s)+\int_{0}^{t}[\int_{0}^{u}f(\phi_{j-1}(s))ds]du,\;j=1,2,\ldots,.\label{eq:LISTofENTITIES1}
\end{equation}

In the upcoming discussion we will show that the successive approximations
$\phi_{j}(t)$ , converge absolutely and uniformly to a continuous
function $\phi(t)$ on the interval $I$.

The method of proof requires special consideration of $\phi_{0}(t)$
and of $\phi_{1}(t)-\phi_{0}(t)$ . The rest of the differences $\phi_{j+1}(t)-\phi_{j}(t)\thinspace;j=1,2,\ldots,$
follow similar patterns. We assume that in the double integrals below
the upper bounds in the integrals obey the order $0\leq s\leq u\leq t$$\leq L.$
Recall that the successive approximations are defined so that
\begin{equation}
\left|\phi_{1}(t)-\phi_{0}(t)\right|\leq\int_{0}^{t}\int_{0}^{u}\left|f(\phi_{0}(s))\right|dsdu\Longrightarrow\left|\phi_{1}(t)-\phi_{0}(t)\right|\leq\int_{0}^{t}\int_{0}^{u}Mdsdu=\frac{1}{2!}Mt^{2},\label{eq:Harry1Even}
\end{equation}
\begin{equation}
\phi_{2}(t)-\phi_{1}(t)=\int_{0}^{t}\int_{0}^{u}[f(\phi_{1}(s))-f(\phi_{0}(s)]dsdu\Longrightarrow\label{eq:HARRY2EVEN}
\end{equation}
\[
\left|\phi_{2}(t)-\phi_{1}(t)\right|\leq\int_{0}^{t}\int_{0}^{u}\left|f(\phi_{1}(s))-f(\phi_{0}(s)\right|dsdu.
\]

Insert the Lipchitz condition (\ref{eq:HARRYLIPSHITZCONDITION}) in
(\ref{eq:HARRY2EVEN}) and obtain 
\begin{equation}
\left|\phi_{2}(t)-\phi_{1}(t)\right|\leq\int_{0}^{t}\int_{0}^{u}K\left|\phi_{1}(s)-\phi_{0}(s)\right|dsdu.\label{eq:HARRY3EVEN}
\end{equation}

Now use the special estimate on the right hand side of (\ref{eq:Harry1Even})
to obtain 
\begin{equation}
\left|\phi_{2}(t)-\phi_{1}(t)\right|\leq\int_{0}^{t}\int_{0}^{u}K[\frac{1}{2!}M]s^{2}dsdu=\frac{M}{4!}Kt^{4}=\frac{M}{4!K}(K)^{2}t^{4}.\label{eq:HARRY4EVEN}
\end{equation}

The reader can easily verify that 
\begin{equation}
\left|\phi_{3}(t)-\phi_{2}(t)\right|\leq\int_{0}^{t}\int_{0}^{u}K[\frac{1}{4!}KMs^{4}]dsdu=\frac{M}{6!K}(K)^{3}t^{6}.\label{eq:HARRY5EVEN}
\end{equation}
The formulas (\ref{eq:HARRY4EVEN}) and (\ref{eq:HARRY5EVEN}) make
it is easy to guess an induction pattern that is 
\begin{equation}
\ref{eq:HARRYEVENINDUCTIONPATTERN}\left|\phi_{j}(t)-\phi_{j-1}(t)\right|\leq\frac{M}{(2j)!K}(K)^{j}t^{2j},\;j=1,2,\ldots,.\label{eq:HARRYEVENINDUCTIONPATTERN}
\end{equation}

Without loss of generality we may assume that the expressions $\frac{M}{(2j)!K}(K)^{j}t^{2j}$
are well defined for $K=0$ as well. This makes $f(y)$ by virtue
of (\ref{eq:HARRYLIPSHITZCONDITION}) a constant vector and the solution
$\phi(t)\equiv\phi_{0}(t)$.

In order to aver the induction pattern we need to show that (\ref{eq:HARRYEVENINDUCTIONPATTERN})
implies
\begin{equation}
\left|\phi_{j+1}(t)-\phi_{j}(t)\right|\leq\frac{M}{[2(j+1)]!K}(K)^{j+1}t^{2(j+1)},\;j=1,2,\ldots,.\label{eq:HARRYEVEN(j+1)INDUCTION}
\end{equation}

Indeed, insert the induction inequality (\ref{eq:HARRYEVENINDUCTIONPATTERN})
into 
\begin{equation}
\left|\phi_{j+1}(t)-\phi_{j}(t)\right|\leq\int_{0}^{t}\int_{0}^{u}K\left|\phi_{j}(s)-\phi_{j-1}(s)\right|dsdu\leq\label{eq:HARRYINDUCTIONJ2(J+1)}
\end{equation}
\[
\int_{0}^{t}\int_{0}^{u}K[\frac{M}{(2j)!K}(K)^{j}s^{2j}]dsdu.
\]

and obtain the desired conclusion (\ref{eq:HARRYEVEN(j+1)INDUCTION}).
It is easily verified by the ratio test that the majorant series $\sum_{j=1}^{\infty}\frac{M}{(2j)!K}(K)^{j}t^{2j}$
are absolutely and uniformly convergent on any closed interval $[-r,r],\;r>0$.
This is verified by the ratio test 
\begin{equation}
lim_{j\rightarrow\infty}\frac{\frac{M}{[2(j+1)]!K}(K)^{j+1}t^{2(j+1)}}{1}\frac{1}{\frac{M}{(2j)!K}(K)^{j}t^{2j}}=lim_{j\rightarrow\infty}\frac{Kt^{2}}{(2j+2)(2j+1)}=0.\label{eq:HARRYSYMMETRYRATIOTEST}
\end{equation}

The series define a function $\psi(t)$ such that for $t\in I=[-\sqrt{\frac{2b}{M}},\sqrt{\frac{2b}{M}}]$
we have 
\begin{equation}
\psi(t):=\sum_{j=1}^{\infty}\frac{M}{(2j)!K}(K)^{j}t^{2j},\label{eq:HARRYMAJORANTSERIES}
\end{equation}

\begin{equation}
\sum_{j=1}^{\infty}\left|\phi_{j}(t)-\phi_{j-1}(t)\right|\leq\psi(t):=\sum_{j=1}^{\infty}\frac{M}{(2j)!K}(K)^{j}t^{2j}\leq\psi(\sqrt{\frac{2b}{M}}).\label{eq:SERIES|PHIJ+!-PHIJ|UNIFORMLY}
\end{equation}

The relation (\ref{eq:SERIES|PHIJ+!-PHIJ|UNIFORMLY}) make the series
\begin{equation}
\phi_{0}(t)+\sum_{j=1}^{\infty}[\phi_{j}(t)-\phi_{j-1}(t)],\label{eq:INFINITESUM=00005BPHIJ+1-PHIJ=00005D}
\end{equation}

absolutely and uniformly convergent for $t\in I=[-\sqrt{\frac{2b}{M}},\sqrt{\frac{2b}{M}}]$.
Hence, a function $\phi(t)$ on $[-\sqrt{\frac{2b}{M}},\sqrt{\frac{2b}{M}}]$
exists such that 
\begin{equation}
\phi(t):=lim_{j\rightarrow\infty}\phi_{j+1}(t)=\phi_{0}(t)+lim_{j\rightarrow\infty}\int_{0}^{t}\int_{0}^{u}f(\phi_{j}(s))dsdu=\label{eq:HARRYSOLUTION@INTEGRALEQUATION}
\end{equation}
\[
\phi_{0}(t)+\int_{0}^{t}\int_{0}^{u}lim_{j\rightarrow\infty}f(\phi_{j}(s))dsdu=
\]
\[
\phi_{0}(t)+\int_{0}^{t}\int_{0}^{u}f(lim_{j\rightarrow\infty}(\phi_{j}(s))dsdu=
\]
\[
\phi_{0}(t)+\int_{0}^{t}\int_{0}^{u}f((\phi(s))dsdu.
\]

If we choose $\phi_{0}(t)\equiv y_{0}$ then the solution $\phi(t):=lim_{j\rightarrow\infty}\phi_{j}(t)$
will be an even solution. If we choose $\phi_{0}(t)\equiv t\eta$
then $\phi(t):=lim_{j\rightarrow\infty}\phi_{j}(t)$ will be an on
odd function on condition that $-f(y)=f(-y)$.

In order to obtain solutions to (\ref{eq:IVPt0EVEN}) and (\ref{eq:IVPt0ODD})
, together with the estimates $\left|t-t_{0}\right|\leq\sqrt{\frac{2b}{M}}$
on the interval of existence, we consider the initial value problem
\begin{equation}
\frac{d^{2}\widehat{y}}{d\tau^{2}}=f(\widehat{y}),\;\widehat{y}(0)=y_{0},\frac{d\widehat{y}}{d\tau}(0)=\eta,\label{eq:TAUNEWIVP}
\end{equation}

where $\tau$ is a new independent variable and $\widehat{y}(\tau)$
is the unique solution of (\ref{eq:TAUNEWIVP}). Theorem 11 applies
to the existence of solutions $\widehat{y}(\tau)$ with the desired
properties of symmetry. Observe that we have 
\begin{equation}
\tau=t-t_{0}\Longrightarrow\{\tau=0\Longleftrightarrow t=t_{0}\}.\label{eq:INITIALTAUINTIALt0}
\end{equation}

Put 
\begin{equation}
y(t):=\widehat{y}(\tau)=\widehat{y}(t-t_{0})\Longrightarrow y(t_{0}):=\widehat{y}(0),\;\frac{dy(t_{0})}{dt}=\frac{d\widehat{y}(0)}{d\tau}.\label{eq:INITIALCONDITIONSTAUIMPLYICt}
\end{equation}

It is now easily verified that 
\begin{equation}
\frac{dy}{dt}=\frac{d\widehat{y}}{d\tau}\frac{d\tau}{dt}=\frac{d\widehat{y}}{d\tau}\Longrightarrow\frac{d^{2}y}{dt^{2}}=\frac{d^{2}\widehat{y}}{d\tau^{2}}.\label{eq:INVARIANTDERIVATIVES}
\end{equation}

Since the vector function of $f(y)$ is independent of $t$ we conclude
that the existence of solutions to the initial value problem (\ref{eq:TAUNEWIVP}),
implies the existence of corresponding solutions to the initial value
problem (\ref{eq:INPINtgeneral})
\begin{equation}
\frac{d^{2}y}{dt^{2}}=y''=f(y),\;y(0)=y_{0},y'(0)=\eta.\label{eq:INPINtgeneral}
\end{equation}

Conversely, the existence of solutions to the initial value problem
(\ref{eq:INPINtgeneral}) , implies the existence of corresponding
solutions to the initial value problem (\ref{eq:TAUNEWIVP}).

Last but not least, the existence of global solutions to (\ref{eq:INPINtgeneral})
under the conditions stated follows by letting $b\rightarrow\infty$
in (\ref{eq:DODEFINITIONofINTEGRALS}).
\end{proof}
\begin{rem}
It is easily verified that the Lipchitz condition (\ref{eq:HARRYLIPSHITZCONDITION})
renders the function $f(y)$ continuous in $D$.
\end{rem}

Notice also
\begin{rem}
If $f(y)$ is defined for all $y\in\mathbb{R}^{n}$ and satisfies
a sub-linearity condition 
\begin{equation}
\left|f(y)\right|\leq M_{1}\left|y\right|+M_{2},\;M_{1},M_{2}\geq0,\label{eq:Sublinearitycondition.}
\end{equation}
and $M_{1},M_{2}$ are certain constants independent of $y$ , then
the Gronwall Lemma e.g. \cite{BRAUERNOHEL} Chapter 1, implies that
there exists a constant $M_{3}>0$ for a given initial value problem,
such that 
\begin{equation}
\left|f(y)\right|\leq M,\;M:=M_{1}M_{3}+M_{2},\;M_{1},M_{2},M_{3}\geq0.\label{eq:GronwallRenders|f(y)|lessM}
\end{equation}
The condition (\ref{eq:Sublinearitycondition.}) applies to the the
pendulum equation $y''=-sin(y)$ with $M_{1}=0$ and $M=M_{2}=1$
for all initial value problems.
\end{rem}

Consider also
\begin{rem}
How are symmetric solutions to first order systems $z'=H(z)$ scarce?
In order to better understand this scarcity, consider an initial value
problem 
\begin{equation}
z'=H(z),\,z(t_{0})=z_{0},\qquad,z,H(z)\in\mathbb{R}^{m},\;m\in\mathbb{N}.\label{eq:FIRSTORDERSYSTEMS}
\end{equation}

Assume that $m\neq2n$ or assume in case that $m=2n$ that $z'=H(z)$
does not originate from $z^{T}=(y,y')$ where $H(z)^{T}=(y',f(y)$).
Also assume that $H(z)\in C^{1}(D)$ where $D$ is some region of
$\mathbb{R}^{m}$. If (\ref{eq:FIRSTORDERSYSTEMS}) possesses an even
solution then $z'(t_{0})=\overrightarrow{0}=H(z_{0})$ . Thus, $z(t)\equiv z_{0}$
must be the unique constant even solution. If (\ref{eq:FIRSTORDERSYSTEMS})
possesses an odd non constant vector solution then the initial value
problem (\ref{eq:FIRSTORDERSYSTEMS}) must obey $z_{0}=\overrightarrow{0}$.
Denote by $JH(z)$ the Jacobean of $H(z)$. Then, we must have $z''=[JH(z(t_{0}))]z'(t_{0})=[JH(\overrightarrow{0})]H(\overrightarrow{0})=\overrightarrow{0}$.
Thus the Jacobean evaluated at $\overrightarrow{0}$ must have an
eigenvalue zero corresponding to the eigenvector $H(\overrightarrow{0})$
if $H(\overrightarrow{0})$$\neq$$\overrightarrow{0}$. The requirement
that some or all even derivatives $z^{(2k)}(0)=\overrightarrow{0},k=1,2,\ldots,$
impose further restrictions on $H(z)$ . It is in this sense that
first order systems normally do not possess non constant odd solutions.
\end{rem}

\section{examples of $y''=f(y)$ occurring in applications}

In this section we provide examples of second order differential systems
and scalar equations to which Theorem 11 applies. These nonlinear
differential systems and equations are autonomous and $f(y)$ is independent
of $y'$ . Normally, these are conservative non dissipative systems
or systems without damping.

Noteworthy is the N body problem. Adopt the following notation: $\thinspace m_{1},m_{2},\ldots,m_{N}$
are the masses of the $N$ bodies; $t\in\mathbb{R}$ is the time variable;
$y_{j}\in\mathbb{R}^{3}$ and $1\leq j\leq N$, are column position
arrows of the $N$ bodies, respectively; $T$ stands for transposition
of a vector or a matrix; $y^{T}=[y_{1,}y_{2,},\cdots,y_{N}]$ and
$f(y)^{T}:=[f_{1}(y),f_{2}(y),\cdots,f_{N}(y)]^{T}$ are respectively
rows of blocks of column vectors with 
\begin{equation}
y_{k}^{''}=f_{k}(y):=\sum_{j\ne k}\frac{Gm_{j}(y_{j}-y_{k})}{\|y_{j}-y_{k}\|^{3}}=\frac{1}{m_{k}}\nabla_{y_{k}}U,\;U:=\sum_{j<k}\frac{m_{j}m_{k}}{\|y_{j}-y_{k}\|}>0.\label{eq:NEWTON'SEQUATIONS}
\end{equation}
 $U$ is the gravitational potential; $\nabla_{y_{k}}U$ is the gradient
of $U$ with respect to the components of $y_{k}$; $\left\Vert y\right\Vert $
is the Euclidean norm , $G$ is the gravitational constant.

Consider the initial value problem for the $N$ body problem in celestial
mechanics

\begin{equation}
y''=f(y),\;y(t_{0})=y_{0},y'(t_{0})=\eta,\;y_{k}(t_{0})\neq y_{j}(t_{0}),\;k\neq j,\;k,j=1,2,\cdots,N.\label{eq:NBODY2NDORDERVEC-1-1}
\end{equation}

Compare with \cite{POLARDCELESTIAL}{\small{}. Choosing the velocity
vector $\eta=\overrightarrow{0}$ , then by Theorem 11 , the initial
value problem 
\begin{equation}
y''=f(y),\;y(t_{0})=y_{0},y'(t_{0})=\overrightarrow{0},\;y_{k}(t_{0})\neq y_{j}(t_{0}),\;k\neq j,\;k,j=1,2,\cdots,N,\label{eq:ZEROINITIALVELOCITIES1}
\end{equation}
}{\small\par}

{\small{}possesses a continuum of even solutions by varying the position
vector $y_{0}$ according to (\ref{eq:ZEROINITIALVELOCITIES1}). Observe
that Newtons equations of celestial mechanics satisfy 
\begin{equation}
-f_{k}(-y)=-\sum_{j\ne k}\frac{Gm_{j}(-y_{j}+y_{k})}{\|y_{j}-y_{k}\|^{3}}=\sum_{j\ne k}\frac{Gm_{j}(y_{j}-y_{k})}{\|y_{j}-y_{k}\|^{3}}\Longrightarrow-f(y)=f(-y).\label{eq:NEWTON-f(y)=00003Df(-y).}
\end{equation}
However, we cannot solve by Theorem 11 for odd solutions an initial
value problem with a condition $y(t_{0})=\overrightarrow{0}$ . This,
because $y(t_{0})=\overrightarrow{0}$ means that all of the celestial
point masses are in state of mutual collision. Then, the equations
in (\ref{eq:NEWTON'SEQUATIONS}) contain undetermined and unbounded
terms which render the equations invalid.}{\small\par}

{\small{}Past, Present and future of the planetary motion and the
Universe are of great interest. }Compare with \cite{Brumberg}. Choose
in (\ref{eq:NBODY2NDORDERVEC-1-1}) $\eta=\overrightarrow{0}$ and
obtain even solutions about $t_{0}$ where $y[(t-t_{0})]\equiv y[-(t-t_{0})]$.
This means that with initial velocities zero, the trajectories of
the $N$ point masses in the future $t>t_{0}$ are a perfect reflection
of the past $t<t_{0}$.

We selected from the textbooks \cite{Boyce E.W. and DiPrima R.C,BRAUERNOHEL,GuckenheimerHolmes,JORDAN=000026SMITH,L. Perko,P.F.Hsieh=000026Y. Sibuya,POLARDCELESTIAL}
and from certain journals, more examples. A substantial number of
differential equations are obtained from a certain potentials $U(y)$
as follows

\begin{equation}
y''=f(y),with\quad y\in\mathbb{\mathbb{R}}^{3},f(y)=-\nabla U(y).\label{eq:POTENTIALandf(y)}
\end{equation}
In a series of papers published between 1924 and 1930, G. Manev studied
a modification to the Newtonian potential $U(y):=-\frac{\gamma}{\left\Vert y\right\Vert }$
where $\gamma=G(m_{1}+m_{2})$ . With $\epsilon>0$ being a certain
parameter Manev studied the potential.

\begin{equation}
U(y):=-\frac{\gamma}{\left\Vert y\right\Vert }-\frac{\epsilon}{\left\Vert y\right\Vert ^{2}}.\label{eq:Manev}
\end{equation}

For a detailed discussion of the history and applications of the Manev
potential we refer the reader to \cite{MANEVI,MANEVDIACUII}. Another
differential system $y''=f(y)$ originates from the Kepler anisotropic
potential. 
\[
U(y)==\text{\textminus}\frac{1}{\sqrt{y_{1}^{2}+y_{2}^{2}}}-\frac{b}{(\mu y_{1}^{2}+y_{2}^{2})^{\frac{\beta}{2}}},\quad\beta\geq2,\;\mu\geq1,\;b>0.
\]
\smallskip{}

Compare with \cite{DIACUISENTROPIC}. The list of equations of scalar
nonlinear equations $x''=f(x)$ below is taken from \cite{JORDAN=000026SMITH}
. Notice that in the first six differential equations listed below,
$f(x)$ is an odd function of $x$. Consequently, the first six equations
possess a continuum of even solutions and a continuum of odd solutions.

In what follows $x,\alpha,a,\omega,\theta,\lambda$,$g,u,m,F,h,\phi,c$,
are real values. A few comments may accompany the equations below.

The equation

\begin{equation}
\frac{d^{2}x}{dt^{2}}=-x-\alpha x^{3}.
\end{equation}

The equation

\begin{equation}
\frac{d^{2}x}{dt^{2}}=-9x,
\end{equation}

Two stars, each with gravitational mass $\mu$, are orbiting each
other under their mutual gravitational forces in such away that their
orbits are circles of radius $a$. A satellite of relatively negligible
mass is moving on a straight line through the mass center $G$ such
that the line is perpendicular to the plane of the mutual orbits of
this binary system. The equation of motion is then given by
\begin{equation}
\frac{d^{2}x}{dt^{2}}=-\frac{2\mu x}{(a^{2}+x^{2})^{\frac{3}{2}}}.
\end{equation}

The Pendulum equation is given by
\[
\frac{d^{2}x}{dt^{2}}=-\omega^{2}\sin(x).
\]
 It may be approximated for moderate amplitudes by the equation 
\begin{equation}
\frac{d^{2}x}{dt^{2}}=-\omega^{2}(x-\frac{1}{6}x^{3}),
\end{equation}

where $x$ is the inclination.

A Pendulum of length $a$ has a bob of mass $m$ which is subject
to a horizontal force $m\omega^{2}a\sin\theta$, where $\theta$ is
the inclination to the downward vertical. The mathematical model governs
the motion is given by: 
\begin{equation}
\frac{d^{2}\theta}{dt^{2}}=\omega^{2}(\cos\theta-\lambda)\sin\theta.
\end{equation}

A particle is attached to a fixed point $O$ on a smooth horizontal
plane by an elastic string, $x$ being the displacement from $O$.
When unstreched, the length of the string is $2a$. The equation of
motion of the particle, which is constrained to move on a straight
line through $O$, is given by: 
\begin{equation}
\frac{d^{2}x}{dt^{2}}=-x+a\thinspace sgn(x),\quad if\quad\mid x\mid>a,\quad\frac{d^{2}x}{dt^{2}}=0,\quad if\quad\mid x\mid\leq a.
\end{equation}

The equation

\begin{equation}
\frac{d^{2}x}{dt^{2}}=x^{4}-x^{2}.
\end{equation}

The equation

\begin{equation}
\frac{d^{2}x}{dt^{2}}=(x-\lambda)(x^{2}-\lambda).
\end{equation}

A Pendulum with a magnetic bob oscillates in a vertical plane over
a magnet, which repels the bob according to the inverse square law,
the equation of motion is then given by: 
\begin{equation}
ma^{2}\frac{d^{2}\theta}{dt^{2}}=-mga\sin\theta+Fh\sin\phi,
\end{equation}
 where $h>a$ and 
\begin{equation}
F=\frac{c}{a^{2}+h^{2}-2ah\cos\theta}.
\end{equation}

The equation 
\begin{equation}
\frac{d^{2}x}{dt^{2}}=-\lambda-x^{3}+x.
\end{equation}

The equation 
\begin{equation}
\frac{d^{2}x}{dt^{2}}=a-\exp(x).
\end{equation}

The equation 
\begin{equation}
\frac{d^{2}x}{dt^{2}}=a+\exp(x).
\end{equation}

The equation of motion of a conservative system is in more general
form studied in various text books. See also \cite{BRAUERNOHEL}.
\begin{equation}
\frac{d^{2}x}{dt^{2}}=-g(x),
\end{equation}
 where $g(0)=0$, $g(x)$ is strictly increasing for all $x$, and
\begin{equation}
\int_{0}^{x}g(u)du\rightarrow\infty\:as\thinspace x\rightarrow\pm\infty.
\end{equation}

\end{document}